\documentclass[12pt]{article}
\usepackage[english]{babel}
\usepackage[all]{xy}
\usepackage{fancyhdr}
\usepackage{setspace, amsmath, amsthm, amssymb, amsfonts, amscd, epic, graphicx, ulem, dsfont}
\usepackage{amsthm}
\usepackage[T1]{fontenc}
\usepackage{cases}
\newtheorem{theorem}{Theorem}

\newtheorem{lemma}[theorem]{Lemma}
\newtheorem{proposition}[theorem]{Proposition}

\newtheorem{example}[theorem]{Example}

\newtheorem{remark}[theorem]{Remark}

\makeatletter

\@addtoreset{equation}{section}
\makeatother

\title{Bi-symphonic maps between Riemannian manifolds}

\author{Ahmed Mohammed Cherif\footnote{University Mustapha Stambouli Mascara, Faculty of Exact Sciences, Mascara 29000, Algeria. Email: a.mohammedcherif@univ-mascara.dz}
and Kaddour Zegga\footnote{University Mustapha Stambouli Mascara, Faculty of Exact Sciences, Mascara 29000, Algeria. Email: zegga.kadour@univ-mascara.dz}}
\date{}

\begin{document}
\maketitle
	
\begin{abstract}
This note introduces an extension to the definition of symphonic maps, denoted as $\varphi:(M,g)\longrightarrow(N,h)$, by exploring variations in the bi-energy functional associated with the pullback metric $\varphi^*h$ between two Riemannian manifolds.\\
\textit{Keywords:}  Symphonic map, Variational problem, Bi-Symphonic map.\\
\textit{Mathematics Subject Classification 2020:} 58E20,  53C43.
\end{abstract}

\section{Introduction}

The term << symphonic maps >> in the context of mathematics, particularly in differential geometry, refers to a specialized concept related to the study of energy functionals and critical points.  The concept of symphonic maps in mathematics is associated with the $m$-symphonic energy, a new energy functional of conformal invariance, and the study of critical points of this energy functional. The term << symphonic maps >> has been used in the context of constructing symphonic maps between ellipsoids \cite{Cao}, and studying the regularity of critical points of the $m$-symphonic energy, particularly in higher dimensions $(m \geq 4)$, \cite{MISN1}.
We consider a functional of pullbacks of metrics on the space of maps $\varphi$ between Riemannian manifolds. Harmonic maps are stationary points of the energy functional $E(\varphi)$ which is an integral of the trace of the pullback of the metric of the target manifold by $\varphi$.
Our functional $E_{sym}(\varphi)$
 is an integral of the norm of the pullback. Stationary maps for $E_{sym}(\varphi)$
 are called as symphonic maps \cite{KN,KN2,MN,MISN,MISN1,NT,NTW}.\\
In this paper, we extend the definition of symphonic maps $\varphi$ from $(M,g)$ to $(N,h)$ via the variation of the bi-energy functional related to the pullback metric $\varphi^*h$ between two Riemannian manifolds.\\
Let $(M^{m}, g)$, $(N^{n}, h)$ be Riemannian manifolds without boundary, and let $\varphi$ be a smooth map from $M$ into $N$.  Let $\varphi^{*}h$ be the
pullback of the metric $h$ by $\varphi$, i.e.,
$$( \varphi^{*}h)(X, Y ) = h(d \varphi (X), d \varphi (Y )),$$
for any vector fields $X$, $Y$ on $M$. We consider the functional or the symphonic energy
$$E_{sym}(\varphi)=\int_{M}\|\varphi^{*}h\|^{2}dv_{g},$$
where $dv_{g}$ is the volume form on $(M, g)$, and  $\|\varphi^{*}h\|$ denotes the norm of the pullback  $\varphi^{*}h$, i.e.,
 $$\|\varphi^{*}h\|^{2}=\sum_{i,j=1}^{m}h(d\varphi(e_{i}),d\varphi(e_{j}))^{2},$$  and
 $\{e_{i}\}_{i=1,...,m}$ is a local orthonormal frame on $(M^{m}, g)$.
 The symphonic energy $E_{sym}(\varphi)$ is related to the energy $E(\varphi)$ in the theory of harmonic maps since the functional $E_{sym}(\varphi)$ is an integral of the norm of the pullback  $\varphi^{*}h$, while the energy $E(\varphi)$ is an integral of the trace of the pullback $\varphi^{*}h$. Indeed, the energy $E(\varphi )$ is defined to be
$$ E(\varphi)=\int_{M}\|d\varphi\|^{2}dv_{g},$$
where
$\|d\varphi\|^{2} =\displaystyle\sum_{i=1}^{m}h(d\varphi(e_{i}), d\varphi(e_{i}))$.  A map $\varphi$ is called harmonic if it is a critical point of the functional energy $E(\varphi)$, i.e., if the first variation of $E(\varphi)$ at $\varphi$ vanishes.
If $M$ is noncompact, $\varphi$ is defined to be a harmonic map if it is a critical point of the functional energy $E(\varphi)$, on any compact subdomain of $M$. The theory of harmonic maps is developed with applications to other fields \cite{EL}, \cite{ES}. The first variation formula of the  functional energy  $E_{sym}(\varphi)$  was given by Shigeo Kawaia, Nobumitsu Nakauchi in \cite{KN}.
\begin{proposition}[First Variation Formula \cite{KN}]
We have
$$\frac{dE_{sym}(\varphi_{t})}{dt}\Big|_{t=0}=-4\int_{M}h(\operatorname{div}_{g}\sigma_{\varphi},\upsilon)dv_{g},$$
for any variation vector field $\upsilon$, where  $\sigma_{\varphi}$ is defined by
$$\sigma_{\varphi}(X)=\sum_{j=1}^{m}h(d\varphi(X),d\varphi(e_{j}))d\varphi(e_{j}), \quad  \forall  X \in \Gamma(TM), $$ and  the divergence of $\sigma_{\varphi}$ is given  by
\begin{eqnarray}\label{EQ1}
\nonumber \operatorname{div}_{g}\sigma_{\varphi}
&=&\sum_{i,j=1}^{m}\Big\{ h(\nabla d\varphi(e_{i},e_{i}),d\varphi(e_{j}))d\varphi(e_{j}) + h(d\varphi(e_{i}),\nabla d\varphi(e_{i},e_{j}))d\varphi(e_{j})\\
&+& h(d\varphi(e_{i}),d\varphi(e_{j}))\nabla d\varphi(e_{i},e_{j})\Big\},
\end{eqnarray}
here $\nabla d\varphi$ denotes the second fundamental form of $\varphi$.
\end{proposition}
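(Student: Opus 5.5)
We will prove the formula by differentiating under the integral sign along a smooth one-parameter variation, then integrating by parts with a divergence-type vector field. Let $\varphi_t:M\to N$, $t\in(-\varepsilon,\varepsilon)$, be a smooth variation with $\varphi_0=\varphi$; if $M$ is noncompact, take it compactly supported. Define $\Phi:M\times(-\varepsilon,\varepsilon)\to N$ by $\Phi(x,t)=\varphi_t(x)$, so that $\upsilon=d\Phi(\partial_t)|_{t=0}$. We work with the pullback connection $\nabla^{\Phi}$ on $\Phi^{-1}TN$, which is metric for $h$ and torsion-free in the sense that
$$\nabla^{\Phi}_{\partial_t}d\Phi(e_i)=\nabla^{\Phi}_{e_i}d\Phi(\partial_t),$$
since $[\partial_t,e_i]=0$ on the product.

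\textbf{Step 1 (differentiate the integrand).} Since $\|\varphi_t^*h\|^2=\sum_{i,j}h(d\varphi_t(e_i),d\varphi_t(e_j))^2$, differentiating and using metric compatibility and symmetry in $i,j$ gives
$$\frac{d}{dt}\Big|_{t=0}\|\varphi_t^*h\|^2=4\sum_{i,j}h(d\varphi(e_i),d\varphi(e_j))\,h(\nabla^{\varphi}_{e_i}\upsilon,d\varphi(e_j)).$$
By the definition of $\sigma_\varphi$, this equals $4\sum_i h(\nabla^\varphi_{e_i}\upsilon,\sigma_\varphi(e_i))$.

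\textbf{Step 2 (integrate by parts).} Define a vector field $Z$ on $M$ by $g(Z,X)=h(\upsilon,\sigma_\varphi(X))$. Computing the divergence at a point where $\nabla e_i=0$ gives
$$\operatorname{div}Z=\sum_i h(\nabla^\varphi_{e_i}\upsilon,\sigma_\varphi(e_i))+h\Big(\upsilon,\sum_i(\nabla_{e_i}\sigma_\varphi)(e_i)\Big),$$
where we set $\operatorname{div}_g\sigma_\varphi=\sum_i\big(\nabla^\varphi_{e_i}\sigma_\varphi(e_i)-\sigma_\varphi(\nabla_{e_i}e_i)\big)$. By the divergence theorem ($\partial M=\emptyset$, or compact support), $\int_M\operatorname{div}Z\,dv_g=0$. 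Combining this with Step 1 yields the stated first variation formula with the factor $-4$.

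\textbf{Step 3 (expand the divergence into \eqref{EQ1}).} We expand $\sum_i(\nabla_{e_i}\sigma_\varphi)(e_i)$ by applying the Leibniz rule to
$$\sigma_\varphi(e_i)=\sum_j h(d\varphi(e_i),d\varphi(e_j))\,d\varphi(e_j),$$
again in a normal frame, and using $\nabla^\varphi_{e_i}d\varphi(e_j)=\nabla d\varphi(e_i,e_j)$ at the point. The derivative of the scalar coefficient produces two terms:
$$h(\nabla d\varphi(e_i,e_i),d\varphi(e_j)) \quad\text{and}\quad h(d\varphi(e_i),\nabla d\varphi(e_i,e_j)).$$
The derivative of $d\varphi(e_j)$ produces $h(d\varphi(e_i),d\varphi(e_j))\nabla d\varphi(e_i,e_j)$. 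Summing over $i,j$ gives exactly \eqref{EQ1}.

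The main obstacle is bookkeeping rather than conceptual. We must check frame-independence, i.e.\ that the normal-frame computation defines a tensorial expression, which follows because $\sigma_\varphi$ is a $\varphi^{-1}TN$-valued $1$-form. We must also justify commuting $\partial_t$ with $e_i$ through the pullback connection on $M\times(-\varepsilon,\varepsilon)$. Once these two points are settled, the rest is routine.
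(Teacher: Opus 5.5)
Your proof is correct and is essentially the argument the paper itself relies on. The paper quotes the proposition without proof, but the first line of (\ref{EQ2}) is your Step 1, and its treatment of $T_3$ is your Steps 2--3. In both, the factor $4$ comes from the symmetry in $i,j$ together with $\nabla^{\phi}_{\partial t}d\varphi_t(e_i)=\nabla^{\phi}_{e_i}d\phi(\partial t)$, and the sign and the expansion of $\tau^{s}(\varphi)=\operatorname{div}_g\sigma_{\varphi}$ come from the divergence of a $1$-form computed in a frame with $\nabla^{M}_{e_i}e_j=0$ at the point.
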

Recall that the map $\varphi$ is called symphonic if it is a critical point of the functional energy
 $E_{sym}$.
We denote by  $\tau^{s}(\varphi)=\operatorname{div}_{g}\sigma_{\varphi}$. Thus,
the map $\varphi$ is called symphonic if and only if  $\tau^{s}(\varphi)=0$.
\begin{example}
Let $(M,g)$ be a Riemannian manifold.
A smooth function $f:(M,g)\longrightarrow\mathbb{R}$ is symphonic if and only if
$$(\Delta f)\|\operatorname{grad}f\|^2+2\operatorname{Hess}_f(\operatorname{grad}f,\operatorname{grad}f)=0,$$
where $\operatorname{grad} f$ (resp. $\operatorname{Hess}_f$) is the gradient vector (resp.  the Hessian) of $f$ with respect to $g$.
\begin{itemize}
  \item If $(M,g)=(\mathbb{R}^n,dx_1^2+...+dx_n^2)$, the last equation is given by
$$\sum_{i,j=1}^n\left[\frac{\partial^2 f}{\partial x_i^2}\left(\frac{\partial f}{\partial x_j}\right)^2+2\frac{\partial f}{\partial x_i}\frac{\partial f}{\partial x_j}\frac{\partial^2 f}{\partial x_i\partial x_j}\right]=0.$$
  \item Let $(M,g)=(\mathbb{R}^2\backslash\{0\},dx_1^2+dx_2^2)$, then the function $f(x_1,x_2)=(x_1^2+x_2^2)^{\frac{1}{3}}$ is symphonic non-harmonic on $(M,g)$.
\end{itemize}
\end{example}
\section{Main results}
\subsection{Second variation formula}
Take any smooth deformation $\phi$ of $\varphi$ with two parameters, i.e., any smooth map
$\phi:(-\epsilon,\epsilon)\times (-\delta,\delta) \times M\longrightarrow N$  such that
$\phi(0,0,x)=\varphi(x)$.\\
Let $\varphi_{s,t}(x)=\phi(s,t,x)$, and we often say a deformation   $\varphi_{s,t}(x)$  instead of a deformation   $\phi(s, t, x)$. Let $\upsilon=d\phi(\frac{\partial}{\partial t})|_{s=t=0}$ and $w=d\phi(\frac{\partial}{\partial s})|_{s=t=0}$,  denote the variation vector fields of the deformation
$\varphi_{s,t}$.\\
Let $\{e_{i}\}_{i=1,...,m}$  be an orthonormal frame with respect to $g$ on $M$, such that $\nabla^{M}_{e_{i}}e_{j}=0$, at fixed point $x \in M$ for all $i, j = 1, ...,m$.\\
 In \cite{KN}, the authors calculated the second variation formula which was used in the study of stability of stationary maps. In the following Theorem we compute the second variation formula where we deduce the symphonic-Jacobi operator which shows the relationship between the first variation of the bi-energy functional in the next section.
\begin{theorem}\label{thm1}
Let $\varphi : (M^{m}, g)\longrightarrow (N^{n}, h)$ be a symphonic map between Riemannian manifolds. Under the notation above we have the following
$$\frac{d^{2}}{ds dt}E_{sym}(\varphi_{s,t})\Big|_{s=t=0}=-4\int_{M}h(J^{s}_{\varphi}(\upsilon),w)dv_{g} ,$$
 where $J^{s}_{\varphi}$ is the symphonic-Jacobi operator corresponding to $\varphi$ given by
 \begin{eqnarray*}
   J^{s}_{\varphi}(\upsilon) &=& \sum_{i,j=1}^{m}\Big\{2h(\nabla^{\varphi}_{e_{i}}\upsilon,d\varphi(e_{j}))\nabla d\varphi(e_{i},e_{j}) \\
   &+&\Big[ h(\nabla^{\varphi}_{e_{i}}\nabla^{\varphi}_{e_{i}}\upsilon-
   \nabla^{\varphi}_{\nabla^{M}_{e_{i}}e_{i}}\upsilon,d\varphi(e_{j}))+
   h(\nabla^{\varphi}_{e_{j}}\upsilon,\nabla d\varphi(e_{i},e_{i}))\Big]d\varphi(e_{j})\\ &+&
    \Big[ h(\nabla d\varphi(e_{i},e_{j}),d\varphi(e_{j}))+
   h(d\varphi(e_{i}),\nabla d\varphi(e_{j},e_{j}))\Big]\nabla^{\varphi}_{e_{i}}\upsilon \\
   &+& h(d\varphi(e_{i}),d\varphi(e_{j}))\Big[\nabla^{\varphi}_{e_{j}}\nabla^{\varphi}_{e_{i}}\upsilon-
   \nabla^{\varphi}_{\nabla^{M}_{e_{j}}e_{i}}\upsilon +
   R^{N}(\upsilon,d\varphi(e_{j}))d\varphi(e_{i})\Big]\Big\},
 \end{eqnarray*}
and $\nabla^{\varphi}$ denotes the pull-back connection on $\varphi^{-1}TN$.
\end{theorem}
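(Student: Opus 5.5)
We will obtain the second variation by differentiating the first variation formula in the other parameter. Apply the First Variation Formula to the one-parameter family $t\mapsto\varphi_{s,t}$ for each fixed $s$. This gives
$$\frac{\partial}{\partial t}E_{sym}(\varphi_{s,t})\Big|_{t=0}=-4\int_M h\big(\tau^{s}(\varphi_{s,0}),\,d\phi(\tfrac{\partial}{\partial t})|_{t=0}\big)dv_g .$$
More conveniently, we work on the product $(-\epsilon,\epsilon)\times(-\delta,\delta)\times M$ with the pull-back connection $\nabla^{\phi}$ on $\phi^{-1}TN$. We write $\tau^s(\varphi_{s,t})$ as a section of $\phi^{-1}TN$, computed by (\ref{EQ1}) with $d\varphi$ replaced by $d\phi$ restricted to $M$-directions. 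Differentiating in $s$ and evaluating at $s=t=0$ produces two terms. The first is $h(\tau^s(\varphi),\nabla^\phi_{\partial_s}d\phi(\partial_t))$, which vanishes because $\varphi$ is symphonic. The second is $h(\nabla^\phi_{\partial_s}\tau^s(\varphi_{s,t}),\upsilon)$. So the whole task is to compute $\nabla^\phi_{\partial_s}\tau^s(\varphi_{s,t})|_{s=t=0}$ as an expression in $w$, and then use the symmetry of the second variation to exchange the roles of $\upsilon$ and $w$ so that the result takes the stated form $h(J^s_\varphi(\upsilon),w)$. Alternatively, one can simply differentiate in $t$ first and in $s$ second, so that $\upsilon$ appears directly.

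To compute this derivative, we expand (\ref{EQ1}) into its three summands and apply the Leibniz rule for $\nabla^\phi_{\partial_s}$ (respectively $\nabla^\phi_{\partial_t}$) together with metric compatibility of $h$. Two standard identities are used.

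\begin{enumerate}
\item Since $[\partial_t,e_i]=0$ on the product, torsion-freeness gives $\nabla^\phi_{\partial_t}d\phi(e_i)=\nabla^\phi_{e_i}d\phi(\partial_t)$, which equals $\nabla^\varphi_{e_i}\upsilon$ at $t=s=0$.
\item Using the curvature identity,
$$\nabla^\phi_{\partial_t}\nabla d\phi(e_i,e_j)=\nabla^\phi_{e_i}\nabla^\phi_{e_j}d\phi(\partial_t)-\nabla^\phi_{\nabla^M_{e_i}e_j}d\phi(\partial_t)+R^N(d\phi(\partial_t),d\phi(e_i))d\phi(e_j).$$
\end{enumerate}

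Differentiating $h(\nabla d\varphi(e_i,e_i),d\varphi(e_j))\,d\varphi(e_j)$ then yields three kinds of terms. The first is $h(\nabla^\varphi_{e_i}\nabla^\varphi_{e_i}\upsilon-\nabla^\varphi_{\nabla^M_{e_i}e_i}\upsilon+R^N(\upsilon,d\varphi(e_i))d\varphi(e_i),d\varphi(e_j))\,d\varphi(e_j)$. The second is $h(\nabla d\varphi(e_i,e_i),\nabla^\varphi_{e_j}\upsilon)\,d\varphi(e_j)$. The third is $h(\nabla d\varphi(e_i,e_i),d\varphi(e_j))\nabla^\varphi_{e_j}\upsilon$. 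The second and third summands of (\ref{EQ1}) are treated in the same way.

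The remaining work is bookkeeping: collect the roughly nine resulting terms, relabel $i\leftrightarrow j$ where needed using the symmetry of $\nabla d\varphi$ and of $h(d\varphi(e_i),d\varphi(e_j))$, and match them against the five groups in $J^s_\varphi$. For instance, the two terms of the form $h(\nabla^\varphi_{e_i}\upsilon,d\varphi(e_j))\nabla d\varphi(e_i,e_j)$, coming from the second and third summands, combine into the coefficient $2$. Likewise, the terms with $\nabla^\varphi_{e_i}\upsilon$ multiplied by scalars combine into the bracket $[h(\nabla d\varphi(e_i,e_j),d\varphi(e_j))+h(d\varphi(e_i),\nabla d\varphi(e_j,e_j))]$.

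The main obstacle is precisely this matching. Some terms arising from the derivative will not literally appear in the stated $J^s_\varphi$. These include the curvature term from the first summand, and terms like $h(d\varphi(e_i),\nabla^\varphi_{e_i}\nabla^\varphi_{e_j}\upsilon)\,d\varphi(e_j)$ from differentiating $\nabla d\varphi(e_i,e_j)$ inside the inner product of the second summand. These must be handled after pairing with $w$ and integrating. I would treat them by a divergence argument on $M$: define vector fields such as $X=\sum h(d\varphi(e_i),\nabla^\varphi_{e_j}\upsilon)h(d\varphi(e_j),w)e_i$ and use $\int_M\operatorname{div}X\,dv_g=0$ (on compact $M$, or for compactly supported variations) to move derivatives. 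Combined with the symmetry of $R^N$ and the symmetry of the mixed second derivative in $s,t$, this should convert these leftover terms into the stated ones. If the bookkeeping does not close, I would recheck which variation carries the derivative, since the stated operator is not manifestly self-adjoint, and adjust accordingly.
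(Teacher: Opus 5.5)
\textbf{There is a genuine gap: your final reconciliation step cannot succeed, because the statement as written is false.}

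\textbf{Comparison with the paper.} Your route differs from the paper's. You differentiate the integrated first variation $-4\int_M h(\tau^s(\varphi_{s,t}),\cdot)\,dv_g$ and linearize $\tau^s$. The paper instead differentiates the unintegrated integrand $4\sum_{i,j}h(d\varphi(e_i),d\varphi(e_j))\,h(\nabla^\phi_{e_i}d\phi(\partial t),d\varphi(e_j))$ in $s$ and integrates each of the four resulting terms by parts against $w$. Both routes are sound up to the last step.

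\textbf{The step that fails.} The step you defer, absorbing the leftover terms by a divergence argument, cannot work. Apply the first variation in $s$ and then differentiate in $t$. This gives $\frac{\partial^2}{\partial s\partial t}E_{sym}(\varphi_{s,t})|_{s=t=0}=-4\int_M h(L(\upsilon),w)\,dv_g$ with $L(\upsilon)=\nabla^\phi_{\partial t}\tau^s(\varphi_{0,t})|_{t=0}$. Here $w$ is an arbitrary compactly supported section, chosen independently of $\upsilon$. So the stated formula holds if and only if $L(\upsilon)=J^s_\varphi(\upsilon)$ pointwise. Integration by parts only moves derivatives onto $w$, and $\int_M h(X(\upsilon),w)\,dv_g=0$ for all $w$ forces $X(\upsilon)=0$.

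If you finish the bookkeeping, the curvature leftovers do cancel pointwise, by $h(R^N(A,B)C,D)=-h(R^N(A,B)D,C)$. What remains is
\[
X(\upsilon):=L(\upsilon)-J^s_\varphi(\upsilon)=\sum_{i,j=1}^{m}\Big[h\big(\nabla^{\varphi}_{e_i}\upsilon,\nabla d\varphi(e_i,e_j)\big)+h\big(d\varphi(e_i),\nabla^{\varphi}_{e_i}\nabla^{\varphi}_{e_j}\upsilon-\nabla^{\varphi}_{\nabla^{M}_{e_i}e_j}\upsilon\big)\Big]d\varphi(e_j),
\]
which is not zero in general.

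Two further remarks on your bookkeeping:
\begin{itemize}
\item Both terms $h(\nabla^\varphi_{e_i}\upsilon,d\varphi(e_j))\nabla d\varphi(e_i,e_j)$ that produce the coefficient $2$ come from the third summand of (\ref{EQ1}), not from the second and third. The second summand instead produces the first term of $X(\upsilon)$, which has no counterpart in $J^s_\varphi$.
\item Your self-adjointness worry is the correct diagnosis. Because mixed partials commute, any valid formula of this shape forces the operator to be formally self-adjoint. $L$ is, but $J^s_\varphi=L-X$ in general is not.
\end{itemize}

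\textbf{Counterexample.} Take $\varphi=\mathrm{id}$ on the flat circle $\mathbb{R}/2\pi\mathbb{Z}$, which is symphonic since $\nabla d\varphi=0$. Let $\varphi_{s,t}(x)=x+ta(x)+sb(x)$, so $\upsilon=a\,\partial_\theta$ and $w=b\,\partial_\theta$.
\begin{itemize}
\item Directly, $E_{sym}(\varphi_{s,t})=\int_0^{2\pi}(1+ta'+sb')^4\,dx$. Its mixed derivative at $0$ is $12\int a'b'\,dx=-12\int a''b\,dx$.
\item The stated operator is $J^s_\varphi(\upsilon)=2a''\,\partial_\theta$, which gives $-8\int a''b\,dx$.
\item For $a=b=\sin x$ these are $12\pi$ and $8\pi$. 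The discrepancy is exactly $X(\upsilon)=a''\,\partial_\theta$.
\end{itemize}

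\textbf{Where the paper's proof goes wrong.} The paper's proof has the same defect. Its pieces (\ref{ET1})--(\ref{ET4}) are correct, but the final substitution drops two terms:
\begin{itemize}
\item the third term of (\ref{ET1}), $h(\nabla^\varphi_{e_i}\upsilon,\nabla d\varphi(e_i,e_j))d\varphi(e_j)$;
\item the second term of (\ref{ET2}), $h(\nabla^\varphi_{e_j}\nabla^\varphi_{e_i}\upsilon,d\varphi(e_j))d\varphi(e_i)$.
\end{itemize}
Together these are precisely $X(\upsilon)$. Carried out honestly, your method proves the second variation formula with $J^s_\varphi$ replaced by $L=J^s_\varphi+X$, not the statement as given.
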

\begin{proof}[Proof of Theorem \ref{thm1}]
In the following, we note that  $\partial t =\frac{\partial}{\partial t}$   and  $\partial s =\frac{\partial}{\partial s}$, we compute
\begin{eqnarray}\label{EQ2}
\nonumber \frac{\partial^{2}}{\partial s \partial t}E_{sym}(\varphi_{s,t})\Big|_{s=t=0}
&=& \frac{\partial}{\partial s}\Big[\frac{\partial}{\partial t}
   \int_{M}\|\varphi^{*}_{s,t}h\|^{2}dv_{g}\Big]\Big|_{s=t=0}\\ \nonumber
   &=& 4 \frac{\partial}{\partial s}\Big[\sum_{i,j=1}^{m}\int_{M}h(d\varphi_{s,t}(e_{i}),d\varphi_{s,t}(e_{j}))
   h(\nabla^{\phi}_{e_{i}}d\phi(\partial t),d\varphi_{s,t}(e_{j}))dv_{g} \Big]\Big|_{s=t=0} \\ \nonumber
   &=& 4\sum_{i,j=1}^{m}\Big\{\int_{M}h(\nabla^{\phi}_{\partial s}d\varphi_{s,t}(e_{i}),d\varphi_{s,t}(e_{j}))h(\nabla^{\phi}_{e_{i}}d\phi(\partial t),d\varphi_{s,t}(e_{j}))\Big|_{s=t=0}dv_{g}  \\ \nonumber
   &+&  \int_{M}h(d\varphi_{s,t}(e_{i}),\nabla^{\phi}_{\partial s}d\varphi_{s,t}(e_{j}))h(\nabla^{\phi}_{e_{i}}d\phi(\partial t),d\varphi_{s,t}(e_{j}))\Big|_{s=t=0}dv_{g} \\ \nonumber
   &+&  \int_{M}h(d\varphi_{s,t}(e_{i}),d\varphi_{s,t}(e_{j}))h(\nabla^{\phi}_{\partial s}\nabla^{\phi}_{e_{i}}d\phi(\partial t),d\varphi_{s,t}(e_{j}))\Big|_{s=t=0}dv_{g} \\
   &+& \int_{M}h(d\varphi_{s,t}(e_{i}),d\varphi_{s,t}(e_{j}))h(\nabla^{\phi}_{e_{i}}d\phi(\partial t),\nabla^{\phi}_{\partial s}d\varphi_{s,t}(e_{j}))\Big|_{s=t=0}dv_{g}\Big\}.
\end{eqnarray}
We denote by $T_1$ the first term in the right hand of (\ref{EQ2}), we have
\begin{eqnarray}\label{T1}
\nonumber \frac{1}{4}T_{1} &=&\sum_{i,j=1}^{m}\int_{M}h(\nabla^{\phi}_{e_{i} }d\phi(\partial s),d\varphi_{s,t}(e_{j}))h(\nabla^{\phi}_{e_{i}}d\phi(\partial t),d\varphi_{s,t}(e_{j}))\Big|_{s=t=0}dv_{g} \\&=&\nonumber\sum_{i,j=1}^{m}\int_{M}e_{i}\Big[h(d\phi(\partial s),d\varphi_{s,t}(e_{j}))h(\nabla^{\phi}_{e_{i}}d\phi(\partial t),d\varphi_{s,t}(e_{j}))\Big]\Big|_{s=t=0}dv_{g} \\
&-&\nonumber\sum_{i,j=1}^{m}\int_{M}h(d\phi(\partial s),\nabla^{\phi}_{e_{i}}d\varphi_{s,t}(e_{j}))h(\nabla^{\phi}_{e_{i}}d\phi(\partial t),d\varphi_{s,t}(e_{j}))\Big|_{s=t=0}dv_{g}  \\
&-&\nonumber\sum_{i,j=1}^{m}\int_{M}h(d\phi(\partial s),d\varphi_{s,t}(e_{j}))h(\nabla^{\phi}_{e_{i}}\nabla^{\phi}_{e_{i}}d\phi(\partial t),d\varphi_{s,t}(e_{j}))\Big|_{s=t=0}dv_{g} \\
&-&\sum_{i,j=1}^{m}\int_{M}h(d\phi(\partial s),d\varphi_{s,t}(e_{j}))h(\nabla^{\phi}_{e_{i}}d\phi(\partial t),\nabla^{\phi}_{e_{i}}d\varphi_{s,t}(e_{j})) \Big|_{s=t=0}dv_{g}.\qquad\qquad
  \end{eqnarray}
  We consider the differential $1$-form on $M$, defined by
  $$\omega_{1}(X)=\displaystyle \sum_{j=1}^{m}h(d\phi(\partial s),d\varphi_{s,t}(e_{j}))h(\nabla^{\phi}_{X}d\phi(\partial t),d\varphi_{s,t}(e_{j})),$$
   then, the divergence of $\omega_1$ at $x$ is given by
  $$\mathop{\operatorname{div}}(\omega_{1})=\sum_{i,j=1}^{m}e_{i}\Big[h(d\phi(\partial s),d\varphi_{s,t}(e_{j}))h(\nabla^{\phi}_{e_{i}}d\phi(\partial t),d\varphi_{s,t}(e_{j}))\Big].$$
Now, by using the divergence Theorem (see \cite{BW}), we get
\begin{eqnarray}\label{ET1}
\nonumber\frac{1}{4}T_{1} &=&\nonumber-\sum_{i,j=1}^{m}\int_{M}h\Big(w,h(\nabla^{\varphi}_{e_{i}}\upsilon ,d\varphi(e_{j}))\nabla d\varphi(e_{i},e_{j})\\\nonumber &+&h(\nabla^{\varphi}_{e_{i}}\nabla^{\varphi}_{e_{i}}\upsilon ,d\varphi(e_{j}))d\varphi(e_{j})\\&+& h(\nabla^{\varphi}_{e_{i}}\upsilon ,\nabla d\varphi(e_{i},e_{j}))d\varphi(e_{j})\Big)dv_{g}.
\end{eqnarray}
For the second term  in the right hand of  (\ref{EQ2}), we have
\begin{eqnarray}\label{T2}
  \nonumber\frac{1}{4}T_{2}&=&\sum_{i,j=1}^{m}\int_{M}h(d\varphi(e_{i}),\nabla^{\varphi}_{e_{j}}w)
  h(\nabla^{\varphi}_{e_{i}}\upsilon,d\varphi(e_{j}))dv_{g}  \\
 \nonumber&=&\sum_{i,j=1}^{m}\Big\{ \int_{M}e_{j}[h(d\varphi(e_{i}),w)h(\nabla^{\varphi}_{e_{i}}\upsilon ,d\varphi(e_{j}))]dv_{g}\\
\nonumber &-& \int_{M}h(\nabla^{\varphi}_{e_{j}}d\varphi(e_{i}),w)h(\nabla^{\varphi}_{e_{i}}\upsilon,d\varphi(e_{j}))dv_{g}\\
\nonumber &-& \int_{M}h(d\varphi(e_{i}),w)h(\nabla^{\varphi}_{e_{j}}\nabla^{\varphi}_{e_{i}}\upsilon,d\varphi(e_{j}))dv_{g}\\
&-& \int_{M}h(d\varphi(e_{i}),w)h(\nabla^{\varphi}_{e_{i}}\upsilon,\nabla^{\varphi}_{e_{j}}d\varphi(e_{j}))dv_{g}\Big\}.
\end{eqnarray}
 By using the divergence Theorem, we obtain
\begin{eqnarray}\label{ET2}
\nonumber \frac{1}{4}T_{2} &=& - \int_{M}h\Big(w,\sum_{i,j=1}^{m}h(\nabla^{\varphi}_{e_{i}}\upsilon ,d\varphi(e_{j}))\nabla d\varphi(e_{i},e_{j})\\ \nonumber &+& \sum_{i,j=1}^{m}h(\nabla^{\varphi}_{e_{j}}\nabla^{\varphi}_{e_{i}}\upsilon ,d\varphi(e_{j}))d\varphi(e_{i})\\ &+& \sum_{i=1}^{m}h(\nabla^{\varphi}_{e_{i}}\upsilon ,\tau(\varphi))d\varphi(e_{i})\Big)dv_{g},
\end{eqnarray}
where $\tau(\varphi)$ is the tension field of $\varphi$ (see \cite{BW,ES}).
For the third  term  in the right hand of (\ref{EQ2}), we have
\begin{eqnarray*}
   \frac{1}{4}T_{3}&=& \sum_{i,j=1}^{m}\int_{M}h(d\varphi_{s,t}(e_{i}),d\varphi_{s,t}(e_{j}))h(\nabla^{\phi}_{\partial s}\nabla^{\phi}_{e_{i}}d\phi(\partial t),d\varphi_{s,t}(e_{j}))\Big|_{s=t=0}dv_{g} \\
   &=& \sum_{i,j=1}^{m}\int_{M}h(d\varphi_{s,t}(e_{i}),d\varphi_{s,t}(e_{j}))h(R^{N}(d\varphi(\partial s),d\varphi(e_{i}))d\phi(\partial t),d\varphi_{s,t}(e_{j}))\Big|_{s=t=0} dv_{g} \\
   &+&  \sum_{i,j=1}^{m}\int_{M}h(d\varphi_{s,t}(e_{i}),d\varphi_{s,t}(e_{j}))
   h(\nabla^{\phi}_{e_{i}}\nabla^{\phi}_{\partial s}d\phi(\partial t),d\varphi_{s,t}(e_{j}))\Big|_{s=t=0}dv_{g}\\
   &=&-\sum_{i,j=1}^{m}\int_{M}h(d\varphi_{s,t}(e_{i}),d\varphi_{s,t}(e_{j}))h(R^{N}(d\phi(\partial t),d\varphi_{s,t}(e_{j}))d\varphi_{s,t}(e_{i}),d\phi(\partial s))\Big|_{s=t=0}dv_{g}\\
   &+& \sum_{i,j=1}^{m}\int_{M}e_{i}\Big(h(d\varphi_{s,t}(e_{i}),d\varphi_{s,t}(e_{j}))h(\nabla^{\phi}_{\partial s}d\phi(\partial t),d\varphi_{s,t}(e_{j}))\Big)\Big|_{s=t=0}dv_{g}\\
   &-& \displaystyle\int_{M}h\Big(\nabla^{\phi}_{\partial s}d\phi(\partial t)\Big|_{s=t=0},\tau^{s}(\varphi)\Big)dv_{g}.
\end{eqnarray*}
  Using the symphonic condition of $\varphi$ and the divergence Theorem, we deduce
\begin{eqnarray}\label{ET3}
   \frac{1}{4}T_{3}&=& -\sum_{i,j=1}^{m}\int_{M}h\Big(w,h(d\varphi(e_{i}),d\varphi(e_{j}))R^{N}(\upsilon,d\varphi(e_{j}))d\varphi(e_{i})    \Big)dv_{g},\qquad
\end{eqnarray}
of the same method, we deduct the last term of (\ref{EQ2})
\begin{eqnarray}\label{ET4}
\nonumber \frac{1}{4}T_{4}&=&-\int_{M}h\Big(w,\sum_{i,j=1}^{m}h(\nabla d\varphi(e_{i},e_{j}),d\varphi(e_{j}))\nabla^{\varphi}_{e_{i}}\upsilon\\ &+& \nonumber \sum_{i=1}^{m} h(d\varphi(e_{i}),\tau(\varphi))\nabla^{\varphi}_{e_{i}}\upsilon \\
  &+&\sum_{i,j=1}^{m}h(d\varphi(e_{i}),d\varphi(e_{j}))\nabla^{\varphi}_{e_{j}}\nabla^{\varphi}_{e_{i}}\upsilon \Big)dv_{g}.
 \end{eqnarray}
By substituting (\ref{ET1}), (\ref{ET2}), (\ref{ET3}) and (\ref{ET4}) in (\ref{EQ2}), we get the result of Theorem \ref{thm1}.
\end{proof}

\subsection{The first variation of the bi-energy functional}

Let $\varphi:(M,g)\longrightarrow(N,h)$ be a smooth map between two Riemannian manifolds.
We consider the bi-energy functional related to the pullback $\varphi^{*}h$ by
$$E_{2,sym}(\varphi)=\int_{M}\|\tau^{s}(\varphi)\|^{2}dv_{g}.$$
The map $\varphi$ is called bi-symphonic if it is a critical  point of the bi-energy functional
$E_{2,sym}(\varphi)$,  i.e.,  $$\frac{d}{dt}E_{2,sym}(\varphi_{t})\Big|_{t=0}=0,$$
for any smooth deformation  $\phi$ of $\varphi$  with one parameter, i.e., any smooth map
$\phi:(-\epsilon,\epsilon) \times M\longrightarrow N$, $(t,x)\longmapsto \varphi_t(x)$  such that $\phi(0,x)=\varphi(x)$.
Let  $\upsilon =d\phi(\frac{\partial}{\partial t})\Big|_{t=0}$  denote the variation vector field of the deformation  $\phi$.  Let $\{e_{i}\}_{i=1,...,m}$  be an orthonormal frame with respect to $g$ on $M$, such that  $\nabla^{M}_{e_{i}}e_{j}=0$, at fixed point $x \in M$ for all $i, j = 1, ...,m$. Under the notation above we have the following.
\begin{theorem}\label{thm2}
Let $\varphi : (M^{m}, g)\longrightarrow (N^{n}, h)$ be a smooth map between Riemannian manifolds. Then
$$\frac{d}{dt}E_{2,sym}(\varphi_{t})\Big|_{t=0}=-\int_{M}h( \upsilon ,\tau^{s}_{2}(\varphi))dv_{g},$$
where $\tau^{s}_{2}(\varphi)$ is given by
\begin{eqnarray*}
  \tau^{s}_{2}(\varphi) &=& \sum_{i,j=1}^{m}\Big\{2h(\nabla^{\varphi}_{e_{i}}\tau^{s}(\varphi),d\varphi(e_{j}))\nabla d\varphi(e_{i},e_{j}) \\
   &+&\Big[ h(\nabla^{\varphi}_{e_{i}}\nabla^{\varphi}_{e_{i}}\tau^{s}(\varphi)-
   \nabla^{\varphi}_{\nabla^{M}_{e_{i}}e_{i}}\tau^{s}(\varphi),d\varphi(e_{j}))+
   h(\nabla^{\varphi}_{e_{j}}\tau^{s}(\varphi),\nabla d\varphi(e_{i},e_{i}))\Big]d\varphi(e_{j})\\ &+&
    \Big[ h(\nabla d\varphi(e_{i},e_{j}),d\varphi(e_{j}))+
   h(d\varphi(e_{i}),\nabla d\varphi(e_{j},e_{j}))\Big]\nabla^{\varphi}_{e_{i}}\tau^{s}(\varphi) \\
   &+& h(d\varphi(e_{i}),d\varphi(e_{j}))\Big[\nabla^{\varphi}_{e_{j}}\nabla^{\varphi}_{e_{i}}\tau^{s}(\varphi)-
   \nabla^{\varphi}_{\nabla^{M}_{e_{j}}e_{i}}\tau^{s}(\varphi) +
   R^{N}(\tau^{s}(\varphi),d\varphi(e_{j}))d\varphi(e_{i})\Big]\Big\}.
\end{eqnarray*}
\end{theorem}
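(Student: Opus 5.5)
We differentiate $E_{2,sym}(\varphi_t)=\int_M h(\tau^s(\varphi_t),\tau^s(\varphi_t))\,dv_g$ under the integral sign. This gives
$$\frac{d}{dt}E_{2,sym}(\varphi_t)\Big|_{t=0}=2\int_M h\big(\nabla^{\phi}_{\partial t}\tau^s(\varphi_t)\big|_{t=0},\tau^s(\varphi)\big)dv_g .$$
The task therefore reduces to computing the linearization $\nabla^\phi_{\partial t}\tau^s(\varphi_t)|_{t=0}$ and then moving all derivatives off $\upsilon$ onto $\tau^s(\varphi)$ by integration by parts. We work at a point $x$ with a frame satisfying $\nabla^M_{e_i}e_j=0$ at $x$, and we regard $d\varphi_t(e_i)=d\phi(e_i)$ as sections of $\phi^{-1}TN$ over $(-\epsilon,\epsilon)\times M$.

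First we expand $\tau^s(\varphi_t)$ via (\ref{EQ1}) as the sum of the three trilinear pieces
$$h(\nabla d\varphi_t(e_i,e_i),d\varphi_t(e_j))\,d\varphi_t(e_j),\qquad h(d\varphi_t(e_i),\nabla d\varphi_t(e_i,e_j))\,d\varphi_t(e_j),\qquad h(d\varphi_t(e_i),d\varphi_t(e_j))\,\nabla d\varphi_t(e_i,e_j),$$
and apply $\nabla^\phi_{\partial t}$ by the Leibniz rule. We use two standard commutation identities, both following from $[\partial t,e_i]=0$ and torsion-freeness:
$$\nabla^\phi_{\partial t}d\phi(e_i)=\nabla^\phi_{e_i}d\phi(\partial t),\qquad \nabla^\phi_{\partial t}\nabla d\varphi_t(e_i,e_j)=\nabla^\varphi_{e_i}\nabla^\varphi_{e_j}\upsilon-\nabla^\varphi_{\nabla^M_{e_i}e_j}\upsilon+R^N(\upsilon,d\varphi(e_i))d\varphi(e_j).$$
These express the linearization as a second-order linear differential operator $L(\upsilon)$. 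Its terms have the shapes $h(\nabla_{e_i}\upsilon,\cdot)\,(\cdot)$, $h(\nabla\nabla\upsilon,\cdot)\,(\cdot)$ and $(\cdot)\,\nabla_{e_i}\upsilon$, with coefficients built from $d\varphi$ and $\nabla d\varphi$, together with curvature terms.

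Rather than integrating $L(\upsilon)$ by parts directly, we exploit the structure already established in Theorem \ref{thm1}. By that computation, $\int_M h(L(\upsilon),w)\,dv_g$ equals, up to terms involving $\tau^s(\varphi)$ and the factor $-\tfrac14$, the mixed second derivative of $E_{sym}$. The symmetry of that Hessian then gives $\int h(L(\upsilon),w)=\int h(\upsilon,J^s_\varphi(w))$, modulo the terms that were discarded using the symphonic hypothesis. It is safer, though, to redo the integration by parts explicitly, following the same $T_1,\dots,T_4$ bookkeeping. For each term we introduce a $1$-form $\omega(X)$ of the type used for $\omega_1$, with $\tau^s(\varphi)$ placed in the slot of $w$, and apply the divergence theorem once or twice until no derivative falls on $\upsilon$. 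After collecting terms, the integrand becomes $h(\upsilon,\cdot)$, and the coefficient is the expression of $J^s_\varphi$ with $\upsilon$ replaced by $\tau^s(\varphi)$. The curvature term is moved using the pair symmetry $h(R^N(a,b)c,d)=h(R^N(c,d)a,b)$, exactly as in the computation of $T_3$. The resulting normalization constant will need to be matched to the stated convention $-\int h(\upsilon,\tau^s_2(\varphi))$, and the stated sign and factor will be checked against the statement.

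The main obstacle is the bookkeeping that distinguishes this computation from Theorem \ref{thm1}. There, the symphonic condition $\tau^s(\varphi)=0$ killed the term containing $\nabla^\phi_{\partial s}d\phi(\partial t)$. Here $\varphi$ is arbitrary, but that term has no analogue because we differentiate only once in $t$. So no extra hypothesis is needed, provided the second-order terms $\nabla\nabla\upsilon$ are integrated by parts twice and all the resulting first-order cross terms are correctly combined. In particular, the term $\nabla d\varphi(e_i,e_j)$, and the contractions producing $h(\nabla d\varphi(e_i,e_j),d\varphi(e_j))$ and $h(d\varphi(e_i),\tau(\varphi))$, must assemble into the stated coefficients of $\nabla^\varphi_{e_i}\tau^s(\varphi)$.
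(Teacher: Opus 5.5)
Your strategy (differentiate $\|\tau^s(\varphi_t)\|^2$, expand the three pieces of $\tau^s$ by the Leibniz rule using your two commutation identities, then integrate by parts) is the paper's own route: Lemmas \ref{lem1}--\ref{Lem3} followed by (\ref{E12})--(\ref{E15}). As a method it is sound. The gap is that you assert, without checking, the two facts that would make it land on the statement: that the collected coefficient is the displayed $J^s_\varphi$ evaluated at $\tau^s(\varphi)$, and that the constant can be matched to $-\int h(\upsilon,\tau^s_2(\varphi))$. Both are false.

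Your Hessian-symmetry idea, done exactly, holds for every map, with no symphonic hypothesis and no ``modulo'':
$\partial_s\partial_t E_{sym}=-4\int h(L(w),\upsilon)-4\int h(\tau^s(\varphi),\nabla^\phi_{\partial s}d\phi(\partial t))$.
The last term is symmetric in $(s,t)$ by torsion-freeness. Hence $L$ is formally self-adjoint, and
\[
\frac{d}{dt}E_{2,sym}(\varphi_t)\Big|_{t=0}=2\int_M h\big(L(\upsilon),\tau^s(\varphi)\big)dv_g=2\int_M h\big(\upsilon,L(\tau^s(\varphi))\big)dv_g .
\]
So the constant is $+2$, not $-1$. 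Moreover, expanding $L$ with your own identities gives the displayed operator plus
\[
\sum_{i,j=1}^{m}\Big[h\big(\nabla^{\varphi}_{e_i}\upsilon,\nabla d\varphi(e_i,e_j)\big)+h\big(\nabla^{\varphi}_{e_i}\nabla^{\varphi}_{e_j}\upsilon-\nabla^{\varphi}_{\nabla^M_{e_i}e_j}\upsilon,d\varphi(e_i)\big)\Big]d\varphi(e_j).
\]
Here the curvature terms from the first two pieces of $\tau^s$ cancel by skew-symmetry. These extra terms are exactly the term $h(\nabla^\varphi_{e_i}\upsilon,\nabla d\varphi(e_i,e_j))d\varphi(e_j)$ of (\ref{ET1}) and the term $h(\nabla^\varphi_{e_j}\nabla^\varphi_{e_i}\upsilon,d\varphi(e_j))d\varphi(e_i)$ of (\ref{ET2}). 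Both are dropped when Theorem \ref{thm1} is stated, so leaning on Theorem \ref{thm1} imports that omission.

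A curvature-free check settles the matter. Take $f:(0,\infty)\to\mathbb{R}$ and compactly supported $v$. Then $\tau^s(f)=3f'^2f''$ and $L(v)=(3f'^2v')'=6f'f''v'+3f'^2v''$, while the displayed operator is $5f'f''v'+2f'^2v''$. The true first variation of $\int 9f'^4f''^2$ is $18\int v\,(6f'^2f''^3+8f'^3f''f'''+f'^4f'''')$, which you can confirm from the Euler--Lagrange equation. The statement instead predicts $-3\int v\,(14f'^2f''^3+17f'^3f''f'''+2f'^4f'''')$. For example, $f=t^{15/11}$ annihilates the latter integrand but not the former; the genuine non-symphonic bi-symphonic powers are $t^{4/3}$ and $t^{7/5}$.

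So the statement is false as written, and no completion of your outline can prove it. What your argument actually establishes is
$\frac{d}{dt}E_{2,sym}(\varphi_t)|_{t=0}=2\int_M h(\upsilon,L(\tau^s(\varphi)))\,dv_g$, with $L$ as above. The paper's own intermediate identities (\ref{E12})--(\ref{E15}) do contain the two extra families and the factor $2$. They are lost only in the final assembly of its proof.
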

For the proof of Theorem \ref{thm2}, first we have
$$\frac{d}{dt}E_{2,sym}(\varphi_{t})\Big|_{t=0}=\int_{M}\frac{\partial}{\partial t}h(\tau^{s}(\varphi_{t}),\tau^{s}(\varphi_{t}))\Big|_{t=0}dv_{g}= 2\int_{M}h(\frac{\partial}{\partial t}\tau^{s}(\varphi_{t}),\tau^{s}(\varphi_{t}))\Big|_{t=0}dv_{g},$$
we compute the term $\frac{\partial}{\partial t}\tau^{s}(\varphi_{t})$. We recall that
\begin{eqnarray*}
\tau^{s}(\varphi)
&=&\sum_{i,j=1}^{m}\{ h(\tau(\varphi),d\varphi(e_{j}))d\varphi(e_{j}) + h(d\varphi(e_{i}),\nabla d\varphi(e_{i},e_{j}))d\varphi(e_{j})\\
&&+ h(d\varphi(e_{i}),d\varphi(e_{j}))\nabla d\varphi(e_{i},e_{j})\}.
\end{eqnarray*}
We obtain
\begin{eqnarray}\label{E11}
 \nonumber \frac{\partial}{\partial t}\tau^{s}(\varphi_{t})&=& \sum_{i,j=1}^{m}\{ \frac{\partial}{\partial t} [h(\tau(\varphi_{t}),d\varphi_{t}(e_{j}))d\varphi_{t}(e_{j})] +\frac{\partial}{\partial t}[ h(d\varphi_{t}(e_{i}),\nabla d\varphi_{t}(e_{i},e_{j}))d\varphi_{t}(e_{j})]\\
&+& \frac{\partial}{\partial t}[h(d\varphi_{t}(e_{i}),d\varphi_{t}(e_{j}))\nabla d\varphi_{t}(e_{i},e_{j})]\}.
\end{eqnarray}
So, we need the following Lemmas.
\begin{lemma}\label{lem1}
 The first term of (\ref{E11}) is given by
\begin{eqnarray*}
\sum_{i,j=1}^{m} h\Big( \frac{\partial}{\partial t} h(\tau(\varphi_{t}),d\varphi_{t}(e_{j}))d\varphi_{t}(e_{j}),\tau^{s}(\varphi_{t})\Big)\Big|_{t=0}
&=&\sum_{i,j=1}^{m}\Big\{-
h(\nabla^{\varphi}_{e_{i}}\upsilon,\nabla^{\varphi}_{e_{i}}d\varphi(e_{j}))h(d\varphi(e_{j}),\tau^{s}(\varphi))\\
&+& h(R^{N}(\upsilon,d\varphi(e_{i}))d\varphi(e_{i}),d\varphi(e_{j}))h(d\varphi(e_{j}),\tau^{s}(\varphi))\\&-&
h(\nabla^{\varphi}_{e_{i}}\upsilon,d\varphi(e_{j}))h(\nabla^{\varphi}_{e_{i}}d\varphi(e_{j}),\tau^{s}(\varphi))\\&-&
h(\nabla^{\varphi}_{e_{i}}\upsilon,d\varphi(e_{j}))h(d\varphi(e_{j}),\nabla^{\varphi}_{e_{i}}\tau^{s}(\varphi))\Big\}
\\&+& \sum_{j=1}^{m}\Big\{h(\tau(\varphi),\nabla^{\varphi}_{e_{j}}\upsilon)h(d\varphi(e_{j}),\tau^{s}(\varphi))\\
&+&  h(\tau(\varphi),d\varphi(e_{j}))h(\nabla^{\varphi}_{e_{j}}\upsilon,\tau^{s}(\varphi))\Big\}+\operatorname{div}_g\eta_{1},
\end{eqnarray*}
where $\eta_{1}(X)=\displaystyle \sum_{j=1}^{m}h(\nabla^{\varphi}_{X}\upsilon,d\varphi(e_{j}))h(d\varphi(e_{j}),\tau^{s}(\varphi))$.
\end{lemma}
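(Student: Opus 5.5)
We work pointwise at a fixed $x\in M$ with the normal frame $\nabla^{M}_{e_i}e_j=0$ at $x$, and write everything on the pull-back bundle $\phi^{-1}TN$ over $(-\epsilon,\epsilon)\times M$. We read the left side as $h\big(\nabla^{\phi}_{\partial t}[h(\tau(\varphi_t),d\varphi_t(e_j))d\varphi_t(e_j)],\tau^s(\varphi_t)\big)|_{t=0}$. By the product rule this splits into three pieces:
\begin{enumerate}
\item[(a)] $h(\nabla^{\phi}_{\partial t}\tau(\varphi_t),d\varphi_t(e_j))\,h(d\varphi_t(e_j),\tau^s)$;
\item[(b)] $h(\tau(\varphi_t),\nabla^{\phi}_{\partial t}d\varphi_t(e_j))\,h(d\varphi_t(e_j),\tau^s)$;
\item[(c)] $h(\tau(\varphi_t),d\varphi_t(e_j))\,h(\nabla^{\phi}_{\partial t}d\varphi_t(e_j),\tau^s)$.
\end{enumerate}
The torsion-free identity $\nabla^{\phi}_{\partial t}d\phi(e_j)=\nabla^{\phi}_{e_j}d\phi(\partial t)$ gives $\nabla^{\varphi}_{e_j}\upsilon$ at $t=0$. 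Pieces (b) and (c) therefore produce exactly the two terms in the $\sum_j$ block of the statement, with no further work.

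The substance is piece (a). We write $\tau(\varphi_t)=\sum_i(\nabla^{\phi}_{e_i}d\phi(e_i)-d\phi(\nabla^M_{e_i}e_i))$. Commuting $\nabla^{\phi}_{\partial t}$ past $\nabla^{\phi}_{e_i}$ produces a curvature term, and using $[\partial t,e_i]=0$ we get the standard formula
\[
\nabla^{\phi}_{\partial t}\tau(\varphi_t)\big|_{t=0}=\sum_i\Big(\nabla^{\varphi}_{e_i}\nabla^{\varphi}_{e_i}\upsilon-\nabla^{\varphi}_{\nabla^M_{e_i}e_i}\upsilon+R^N(\upsilon,d\varphi(e_i))d\varphi(e_i)\Big),
\]
which is the first variation of the tension field. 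Inserting this into (a) immediately yields the curvature term $h(R^N(\upsilon,d\varphi(e_i))d\varphi(e_i),d\varphi(e_j))h(d\varphi(e_j),\tau^s(\varphi))$. What remains is $\sum_{i,j}h(\nabla^{\varphi}_{e_i}\nabla^{\varphi}_{e_i}\upsilon,d\varphi(e_j))h(d\varphi(e_j),\tau^s)$, evaluated at $x$.

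Next we integrate by parts pointwise. With $\eta_1$ as defined in the statement and the normal frame at $x$,
\[
\operatorname{div}_g\eta_1=\sum_i e_i\big(\eta_1(e_i)\big)=\sum_{i,j}e_i\Big[h(\nabla^{\varphi}_{e_i}\upsilon,d\varphi(e_j))\,h(d\varphi(e_j),\tau^s(\varphi))\Big].
\]
Expanding by metric compatibility of $\nabla^{\varphi}$ gives four terms. One is the second-derivative term we want to replace. The other three are
\begin{itemize}
\item $h(\nabla^{\varphi}_{e_i}\upsilon,\nabla^{\varphi}_{e_i}d\varphi(e_j))h(d\varphi(e_j),\tau^s)$,
\item $h(\nabla^{\varphi}_{e_i}\upsilon,d\varphi(e_j))h(\nabla^{\varphi}_{e_i}d\varphi(e_j),\tau^s)$,
\item $h(\nabla^{\varphi}_{e_i}\upsilon,d\varphi(e_j))h(d\varphi(e_j),\nabla^{\varphi}_{e_i}\tau^s)$.
\end{itemize}
Solving for the second-derivative term gives $\operatorname{div}_g\eta_1$ minus these three, which are precisely the three negative terms in the statement.

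The main obstacle is bookkeeping rather than ideas. Since the frame $\{e_j\}$ is summed over but $\eta_1$ should be tensorial, we must check that $\eta_1$ is independent of the orthonormal frame. It is, because $\sum_j h(A,d\varphi(e_j))h(d\varphi(e_j),B)$ is a frame-independent contraction. We must also justify the vanishing of the $\nabla^M$-terms at $x$: terms like $\nabla^{\varphi}_{e_i}d\varphi(e_j)$ equal $\nabla d\varphi(e_i,e_j)$ only at $x$, and the statement is pointwise at $x$. Finally, we must track the sign convention for $R^N$ so that the curvature term appears as $R^N(\upsilon,d\varphi(e_i))d\varphi(e_i)$ and matches the convention already used in Theorem~\ref{thm1}.
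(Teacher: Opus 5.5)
Your proposal is correct and follows the paper's proof essentially step for step. You split by the product rule into three pieces, commute $\nabla^{\phi}_{\partial t}$ past $\nabla^{\phi}_{e_i}$ in $\tau(\varphi_t)$ to get the curvature term and $\nabla^{\varphi}_{e_i}\nabla^{\varphi}_{e_i}\upsilon$, and integrate that second-order term by parts against $\eta_1$ at the center of a normal frame; your remarks on the frame-independence of $\eta_1$ and on the curvature sign convention make explicit what the paper leaves implicit.
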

\begin{proof}[Proof of Lemma \ref{lem1}]
 We have
\begin{eqnarray*}
 \sum_{j=1}^{m} \frac{\partial}{\partial t} [h(\tau(\varphi_{t}),d\varphi_{t}(e_{j}))d\varphi_{t}(e_{j})] &=& \sum_{j=1}^{m}\Big\{ h(\nabla^{\phi}_{\partial t}\tau(\varphi_{t}),d\varphi_{t}(e_{j}))d\varphi_{t}(e_{j}) \\ &+& h(\tau(\varphi_{t}),\nabla^{\phi}_{\partial t}d\varphi_{t}(e_{j}))d\varphi_{t}(e_{j})
+ h(\tau(\varphi_{t}),d\varphi_{t}(e_{j}))\nabla^{\phi}_{\partial t}d\varphi_{t}(e_{j})\Big\}\\
&=& \sum_{i,j=1}^{m}\Big\{ h(\nabla^{\phi}_{\partial t}\nabla^{\phi}_{e_{i}}d\varphi_{t}(e_{i}),d\varphi_{t}(e_{j}))d\varphi_{t}(e_{j}) \\ &+& h(\tau(\varphi_{t}),\nabla^{\phi}_{e_{j}}d\phi(\partial t))d\varphi_{t}(e_{j})
+  h(\tau(\varphi_{t}),d\varphi_{t}(e_{j}))\nabla^{\phi}_{e_{j}}d\phi(\partial t)\Big\}\\
&=&\sum_{i,j=1}^{m}\Big\{h\big(R^{N}(d\phi(\partial t),d\varphi_{t}(e_{i}))d\varphi_{t}(e_{i}),d\varphi_{t}(e_{j})\big)d\varphi_{t}(e_{j})\\ &+& h(\nabla^{\phi}_{e_{i}}\nabla^{\phi}_{e_{i}}d\phi(\partial t),d\varphi_{t}(e_{j}))d\varphi_{t}(e_{j})\\
&+&h(\tau(\varphi_{t}),\nabla^{\phi}_{e_{j}}d\phi(\partial t))d\varphi_{t}(e_{j})
+  h(\tau(\varphi_{t}),d\varphi_{t}(e_{j}))\nabla^{\phi}_{e_{j}}d\phi(\partial t)\Big\}.
\end{eqnarray*}
Now we calculate $\displaystyle \sum_{j=1}^{m} h\Big( \frac{\partial}{\partial t} [h(\tau(\varphi_{t}),d\varphi_{t}(e_{j}))d\varphi_{t}(e_{j})],\tau^{s}(\varphi_{t})\Big)\Big|_{t=0}$.
\begin{eqnarray}\label{E5}
\nonumber \sum_{j=1}^{m} h\Big( \frac{\partial}{\partial t} h(\tau(\varphi_{t}),d\varphi_{t}(e_{j}))d\varphi_{t}(e_{j}),\tau^{s}(\varphi_{t})\Big)\Big|_{t=0}
&=&\nonumber
\sum_{i,j=1}^{m} \Big\{h(\nabla^{\varphi}_{e_{i}}\nabla^{\varphi}_{e_{i}}\upsilon,d\varphi(e_{j}))h(d\varphi(e_{j}),\tau^{s}(\varphi))\\&+& \nonumber
h(R^{N}(\upsilon,d\varphi(e_{i}))d\varphi(e_{i}),d\varphi(e_{j}))h(d\varphi(e_{j}),\tau^{s}(\varphi))\Big\}
\\&+&\nonumber \sum_{j=1}^{m}\Big\{
h(\tau(\varphi),\nabla^{\varphi}_{e_{j}}\upsilon)h(d\varphi(e_{j}),\tau^{s}(\varphi))\\
&+&  h(\tau(\varphi),d\varphi(e_{j}))h(\nabla^{\varphi}_{e_{j}}\upsilon,\tau^{s}(\varphi))\Big\}.
\end{eqnarray}
 The first term of (\ref{E5}), becomes
\begin{eqnarray}\label{E6}
\nonumber \displaystyle\sum_{i,j=1}^{m}h(\nabla^{\varphi}_{e_{i}}\nabla^{\varphi}_{e_{i}}\upsilon,d\varphi(e_{j}))
h(d\varphi(e_{j}),\tau^{s}(\varphi))&=&\sum_{i,j=1}^{m}\Big\{
e_{i}\Big[h(\nabla^{\varphi}_{e_{i}}\upsilon,d\varphi(e_{j}))h(d\varphi(e_{j}),\tau^{s}(\varphi))\Big]\\&-&\nonumber
h(\nabla^{\varphi}_{e_{i}}\upsilon,\nabla^{\varphi}_{e_{i}}d\varphi(e_{j}))h(d\varphi(e_{j}),\tau^{s}(\varphi))\\&-&\nonumber
h(\nabla^{\varphi}_{e_{i}}\upsilon,d\varphi(e_{j}))h(\nabla^{\varphi}_{e_{i}}d\varphi(e_{j}),\tau^{s}(\varphi))\\&-&
h(\nabla^{\varphi}_{e_{i}}\upsilon,d\varphi(e_{j}))h(d\varphi(e_{j}),\nabla^{\varphi}_{e_{i}}\tau^{s}(\varphi))\Big\}.\qquad
\end{eqnarray}
Considering the differential $1$-form on $M$,  $$\eta_{1}(X)=\displaystyle \sum_{j=1}^{m}h(\nabla^{\varphi}_{X}\upsilon,d\varphi(e_{j}))h(d\varphi(e_{j}),\tau^{s}(\varphi)),$$ and replacing (\ref{E6}) in (\ref{E5}), we get the result of  Lemma \ref{lem1}.
\end{proof}
\begin{lemma}\label{Lem2}
 The second term of (\ref{E11}) is given by\\

 $\displaystyle\sum_{i,j=1}^{m}h\big(\frac{\partial}{\partial t}[ h(d\varphi_{t}(e_{i}),\nabla d\varphi_{t}(e_{i},e_{j}))d\varphi_{t}(e_{j})],\tau^{s}(\varphi_{t})\big)\Big|_{t=0}$
\begin{eqnarray*}
 \qquad\qquad\qquad\qquad&=&\sum_{i,j=1}^{m}\Big\{h\big(\nabla^{\varphi}_{e_{i}}\upsilon,\nabla d\varphi(e_{i},e_{j})\big)h(d\varphi(e_{j}),\tau^{s}(\varphi)) \\
 &-& h\Big(R^{N}(\upsilon,d\varphi(e_{i}))d\varphi(e_{i}), d\varphi(e_{j})\Big)h(d\varphi(e_{j}),\tau^{s}(\varphi))\\
 &-&h(\tau(\varphi),\nabla^{\varphi}_{e_{j}} \upsilon)h(d\varphi(e_{j}),\tau^{s}(\varphi))\\
 &-& h(d\varphi(e_{i}),\nabla^{\varphi}_{e_{j}} \upsilon)h(\nabla^{\varphi}_{e_{i}}d\varphi(e_{j}),\tau^{s}(\varphi))\\
 &+&h\Big(d\varphi(e_{i}),\nabla d\varphi(e_{i},e_{j})\Big)h(\nabla^{\varphi}_{e_{j}}\upsilon,\tau^{s}(\varphi))\\
 &-&h(d\varphi(e_{i}),\nabla^{\varphi}_{e_{j}}
 \upsilon)h(d\varphi(e_{j}),\nabla^{\varphi}_{e_{i}}\tau^{s}(\varphi))
 \Big\}+\operatorname{div}_g\eta_{2},
\end{eqnarray*}
where $\eta_{2}(X)=\displaystyle \sum_{j=1}^{m}h(d\varphi(X),\nabla^{\varphi}_{e_{j}} \upsilon)h(d\varphi(e_{j}),\tau^{s}(\varphi))$.
\end{lemma}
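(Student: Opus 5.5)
The computation follows the pattern of Lemma \ref{lem1}. First I differentiate the product with the Leibniz rule for the pull-back connection $\nabla^{\phi}$ along $\partial t$:
\begin{eqnarray*}
\frac{\partial}{\partial t}\big[h(d\varphi_t(e_i),\nabla d\varphi_t(e_i,e_j))d\varphi_t(e_j)\big]
&=& h(\nabla^{\phi}_{\partial t}d\varphi_t(e_i),\nabla d\varphi_t(e_i,e_j))d\varphi_t(e_j)\\
&+& h(d\varphi_t(e_i),\nabla^{\phi}_{\partial t}\nabla^{\phi}_{e_i}d\varphi_t(e_j))d\varphi_t(e_j)\\
&+& h(d\varphi_t(e_i),\nabla d\varphi_t(e_i,e_j))\nabla^{\phi}_{\partial t}d\varphi_t(e_j).
\end{eqnarray*}
Here I use $\nabla d\varphi_t(e_i,e_j)=\nabla^{\phi}_{e_i}d\varphi_t(e_j)$ at the fixed point, since $\nabla^M_{e_i}e_j=0$ there. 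For the first and third pieces I use the torsion-free identity $\nabla^{\phi}_{\partial t}d\varphi_t(X)=\nabla^{\phi}_X d\phi(\partial t)$. Pairing with $\tau^s(\varphi_t)$ and setting $t=0$, these give directly
\begin{eqnarray*}
&& h(\nabla^{\varphi}_{e_i}\upsilon,\nabla d\varphi(e_i,e_j))\,h(d\varphi(e_j),\tau^s(\varphi)),\\
&& h(d\varphi(e_i),\nabla d\varphi(e_i,e_j))\,h(\nabla^{\varphi}_{e_j}\upsilon,\tau^s(\varphi)).
\end{eqnarray*}
These are the first and fifth terms of the claimed formula.

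For the middle piece I commute the covariant derivatives:
$$\nabla^{\phi}_{\partial t}\nabla^{\phi}_{e_i}d\varphi_t(e_j)=R^N(d\phi(\partial t),d\varphi_t(e_i))d\varphi_t(e_j)+\nabla^{\phi}_{e_i}\nabla^{\phi}_{e_j}d\phi(\partial t),$$
using $[\partial t,e_i]=0$ and $\nabla^{\phi}_{\partial t}d\varphi_t(e_j)=\nabla^{\phi}_{e_j}d\phi(\partial t)$. The curvature part becomes $h(d\varphi(e_i),R^N(\upsilon,d\varphi(e_i))d\varphi(e_j))\,h(d\varphi(e_j),\tau^s(\varphi))$. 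By the pair symmetry and skew-symmetry of $R^N$, this equals $-h(R^N(\upsilon,d\varphi(e_i))d\varphi(e_i),d\varphi(e_j))\,h(d\varphi(e_j),\tau^s(\varphi))$, which is the second term. Getting this sign right is the point I would check most carefully; it must match the opposite sign in Lemma \ref{lem1}, where the two curvature contributions should eventually cancel.

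The remaining part is
$$\sum_{i,j=1}^m h(d\varphi(e_i),\nabla^{\varphi}_{e_i}\nabla^{\varphi}_{e_j}\upsilon)\,h(d\varphi(e_j),\tau^s(\varphi)),$$
and I integrate it by parts in $e_i$. Write it as $e_i[\,h(d\varphi(e_i),\nabla^{\varphi}_{e_j}\upsilon)\,h(d\varphi(e_j),\tau^s(\varphi))\,]$ minus three terms, one for each factor the derivative lands on:
\begin{itemize}
\item $\nabla^{\varphi}_{e_i}d\varphi(e_i)$, which sums over $i$ to $\tau(\varphi)$ and gives $-h(\tau(\varphi),\nabla^{\varphi}_{e_j}\upsilon)\,h(d\varphi(e_j),\tau^s(\varphi))$;
\item $\nabla^{\varphi}_{e_i}d\varphi(e_j)$, giving $-h(d\varphi(e_i),\nabla^{\varphi}_{e_j}\upsilon)\,h(\nabla^{\varphi}_{e_i}d\varphi(e_j),\tau^s(\varphi))$;
\item $\nabla^{\varphi}_{e_i}\tau^s(\varphi)$, giving $-h(d\varphi(e_i),\nabla^{\varphi}_{e_j}\upsilon)\,h(d\varphi(e_j),\nabla^{\varphi}_{e_i}\tau^s(\varphi))$.
\end{itemize}
These are exactly the third, fourth and sixth terms.

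Finally, the total derivative $\sum_{i,j}e_i[\cdots]$ is $\operatorname{div}_g\eta_2$ at $x$, with $\eta_2(X)=\sum_j h(d\varphi(X),\nabla^{\varphi}_{e_j}\upsilon)\,h(d\varphi(e_j),\tau^s(\varphi))$. This is the same argument used for $\omega_1$ in the proof of Theorem \ref{thm1}: in a normal frame at $x$, $\operatorname{div}\eta_2=\sum_i e_i(\eta_2(e_i))$. Collecting the six terms and the divergence gives the lemma. The main obstacle is bookkeeping: making sure each term from the integration by parts is assigned to the correct slot, and that the curvature sign is right.
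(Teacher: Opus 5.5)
Your proposal is correct and follows the paper's proof step for step. The shared steps are the Leibniz expansion, the identity $\nabla^{\phi}_{\partial t}d\varphi_t(X)=\nabla^{\phi}_X d\phi(\partial t)$, the curvature commutation in the middle term (its sign fixed by skew-symmetry of $h(R^N(\cdot,\cdot)\cdot,\cdot)$ in the last two slots, so it cancels the curvature term of Lemma \ref{lem1}), and one integration by parts in $e_i$ that gives $\operatorname{div}_g\eta_2$ plus the third, fourth and sixth terms; you also correctly write $\nabla^{\varphi}_{e_j}\upsilon$ in the fifth term, where the paper's intermediate display (\ref{E7}) has the index typo $\nabla^{\varphi}_{e_i}\upsilon$.
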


\begin{proof}[Proof of Lemma \ref{Lem2}] We compute
\begin{eqnarray*}
 \sum_{i,j=1}^{m} \frac{\partial}{\partial t}[ h(d\varphi_{t}(e_{i}),\nabla d\varphi_{t}(e_{i},e_{j}))d\varphi_{t}(e_{j})] &=& \sum_{i,j=1}^{m}\Big\{ h\Big(\nabla^{\phi}_{\partial t}d\varphi_{t}(e_{i}),\nabla d\varphi_{t}(e_{i},e_{j})\Big)d\varphi_{t}(e_{j})\\
  &+&  h\Big(d\varphi_{t}(e_{i}),\nabla^{\phi}_{\partial t}\nabla^{\phi}_{e_{i}} d\varphi_{}(e_{j})\Big)d\varphi_{t}(e_{j}) \\
   &+&  h\Big(d\varphi_{t}(e_{i}),\nabla d\varphi_{t}(e_{i},e_{j})\Big)\nabla^{\phi}_{\partial t}d\varphi_{t}(e_{j})\Big\}\\
    &=&\sum_{i,j=1}^{m}\Big\{h\Big(\nabla^{\phi}_{e_{i}}d\phi(\partial t),\nabla d\varphi_{t}(e_{i},e_{j})\Big)d\varphi_{t}(e_{j})\\
    &+& h\Big(d\varphi(e_{i}),R^{N}(d\phi(\partial t),d\varphi_{t}(e_{i})) d\varphi_{t}(e_{j})\Big)d\varphi_{t}(e_{j})\\
    &+& h\Big(d\varphi_{t}(e_{i}),\nabla^{\phi}_{e_{i}}\nabla^{\phi}_{e_{j}} d\phi(\partial t)\Big)d\varphi_{t}(e_{j})\\
    &+& h\Big(d\varphi_{t}(e_{i}),\nabla d\varphi_{t}(e_{i},e_{j})\Big)\nabla^{\phi}_{e_{i}}d\varphi_{t}(\partial t)\Big\}.
\end{eqnarray*}
So that\\

$\displaystyle\sum_{i,j=1}^{m}h(\frac{\partial}{\partial t}[ h(d\varphi(e_{i}),\nabla d\varphi(e_{i},e_{j}))d\varphi(e_{j})],\tau^{s}(\varphi))\Big|_{t=0}$
\begin{eqnarray}\label{E7}
\qquad\qquad\qquad\qquad&=&\nonumber\sum_{i,j=1}^{m}\Big\{h(\nabla^{\varphi}_{e_{i}}\upsilon,\nabla d\varphi(e_{i},e_{j}))h(d\varphi(e_{j}),\tau^{s}(\varphi)) \\
&+& \nonumber h(d\varphi(e_{i}),R^{N}(\upsilon,d\varphi_{t}(e_{i})) d\varphi(e_{j}))h(d\varphi(e_{j}),\tau^{s}(\varphi))\\&+& \nonumber h(d\varphi(e_{i}),\nabla^{\varphi}_{e_{i}}\nabla^{\varphi}_{e_{j}} \upsilon)h(d\varphi(e_{j}),\tau^{s}(\varphi))\\
&+& h(d\varphi(e_{i}),\nabla d\varphi(e_{i},e_{j}))h(\nabla^{\varphi}_{e_{i}}\upsilon,\tau^{s}(\varphi))\Big\}.
\end{eqnarray}
The third term of the right hand of (\ref{E7}), becomes
\begin{eqnarray}\label{E8}
\nonumber \displaystyle \sum_{i,j=1}^{m}h\left(d\varphi(e_{i}),\nabla^{\varphi}_{e_{i}}\nabla^{\varphi}_{e_{j}} \upsilon\right)h(d\varphi(e_{j}),\tau^{s}(\varphi)) &=& e_{i}\Big[\sum_{i,j=1}^{m}h(d\varphi(e_{i}),\nabla^{\varphi}_{e_{j}} \upsilon)h(d\varphi(e_{j}),\tau^{s}(\varphi))\Big] \\
\nonumber &-& \sum_{i,j=1}^{m}h(d\varphi(e_{i}),\nabla^{\varphi}_{e_{j}} \upsilon)h(\nabla^{\varphi}_{e_{i}}d\varphi(e_{j}),\tau^{s}(\varphi))\\ \nonumber
 &-&\sum_{i,j=1}^{m}h(d\varphi(e_{i}),\nabla^{\varphi}_{e_{j}}\upsilon)
 h(d\varphi(e_{j}),\nabla^{\varphi}_{e_{i}}\tau^{s}(\varphi))\\
 &-&\sum_{i,j=1}^{m}h(\tau(\varphi),\nabla^{\varphi}_{e_{j}} \upsilon)h(d\varphi(e_{j}),\tau^{s}(\varphi)).\qquad\qquad
\end{eqnarray}
Substituting (\ref{E8})  in (\ref{E7}),  we deduct the result of Lemma  (\ref{Lem2}).
\end{proof}
\begin{lemma}\label{Lem3}
 The third term of (\ref{E11}) is given by\\

 $\displaystyle\sum_{i,j=1}^{m} h\Big(\frac{\partial}{\partial t}[h(d\varphi_{t}(e_{i}),d\varphi_{t}(e_{j}))\nabla d\varphi_{t}(e_{i},e_{j})],\tau^{s}(\varphi_{t})\Big)\Big|_{t=0}$
\begin{eqnarray*}
\qquad\qquad\qquad\qquad&=&
\sum_{i,j=1}^{m}\Big\{h(\nabla^{\varphi}_{e_{i}}\upsilon,d\varphi(e_{j}))h(\nabla d\varphi(e_{i},e_{j}),\tau^{s}(\varphi))\\
 &+& h(d\varphi(e_{i}),\nabla^{\varphi}_{e_{j}}\upsilon)h(\nabla d\varphi(e_{i},e_{j}),\tau^{s}(\varphi))\\
   &+&h(d\varphi(e_{i}),d\varphi(e_{j}))h(R^{N}(\upsilon, d\varphi(e_{i}))d\varphi(e_{j}),\tau^{s}(\varphi))\\
   &-& h(\tau(\varphi),d\varphi(e_{j}))h(\nabla ^{\varphi}_{e_{j}}\upsilon,\tau^{s}(\varphi))\\
   &-&h(d\varphi(e_{i}),\nabla^{\varphi}_{e_{i}}d\varphi(e_{j}))h(\nabla^{\varphi}_{e_{j}}\upsilon,\tau^{s}(\varphi))\\
   &-&h(d\varphi(e_{i}),d\varphi(e_{j}))h(\nabla^{\varphi}_{e_{j}}\upsilon ,\nabla^{\varphi}_{e_{i}}\tau^{s}(\varphi))\Big\}+ \operatorname{div}_g \eta_{3},
\end{eqnarray*}
where $\displaystyle\eta_{3}(X)=\sum_{i,j=1}^{m}h(d\varphi(X),d\varphi(e_{j}))h(\nabla ^{\varphi}_{e_{j}}\upsilon,\tau^{s}(\varphi))$
\end{lemma}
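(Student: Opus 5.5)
We follow the same two-stage pattern as in the proofs of Lemma \ref{lem1} and Lemma \ref{Lem2}: first differentiate the product in $t$ pointwise, then move one covariant derivative off $\upsilon$ by a divergence identity. Work at a fixed point $x$ with the frame satisfying $\nabla^{M}_{e_i}e_j=0$, so that $\nabla d\varphi_t(e_i,e_j)=\nabla^{\phi}_{e_i}d\varphi_t(e_j)$ there.

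\textbf{Step 1 (pointwise derivative).} By the Leibniz rule,
$$\frac{\partial}{\partial t}\big[h(d\varphi_t(e_i),d\varphi_t(e_j))\nabla d\varphi_t(e_i,e_j)\big]$$
splits into three pieces. In the first two we use $\nabla^{\phi}_{\partial t}d\varphi_t(e_i)=\nabla^{\phi}_{e_i}d\phi(\partial t)$, since $[\partial t,e_i]=0$ and the connection is torsion free. This produces, at $t=0$, the terms
$$h(\nabla^{\varphi}_{e_i}\upsilon,d\varphi(e_j))\nabla d\varphi(e_i,e_j)+h(d\varphi(e_i),\nabla^{\varphi}_{e_j}\upsilon)\nabla d\varphi(e_i,e_j).$$
Pairing with $\tau^{s}(\varphi)$ gives the first two lines of the lemma.

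In the third piece we commute derivatives using the curvature:
$$\nabla^{\phi}_{\partial t}\nabla^{\phi}_{e_i}d\varphi_t(e_j)=R^{N}(d\phi(\partial t),d\varphi_t(e_i))d\varphi_t(e_j)+\nabla^{\phi}_{e_i}\nabla^{\phi}_{e_j}d\phi(\partial t).$$
This is the same commutation already used in Lemma \ref{Lem2} and in $T_3$ of Theorem \ref{thm1}. The curvature term gives the line $h(d\varphi(e_i),d\varphi(e_j))h(R^{N}(\upsilon,d\varphi(e_i))d\varphi(e_j),\tau^{s}(\varphi))$. What is left is
$$\sum_{i,j}h(d\varphi(e_i),d\varphi(e_j))\,h(\nabla^{\varphi}_{e_i}\nabla^{\varphi}_{e_j}\upsilon,\tau^{s}(\varphi)).$$

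\textbf{Step 2 (integration by parts).} Take the $1$-form $\eta_3(X)=\sum_{j}h(d\varphi(X),d\varphi(e_j))h(\nabla^{\varphi}_{e_j}\upsilon,\tau^{s}(\varphi))$. At $x$,
$$\operatorname{div}_g\eta_3=\sum_i e_i[\eta_3(e_i)].$$
Expanding $e_i[\cdot]$ by the Leibniz rule produces four terms:
\begin{itemize}
\item the wanted second-order term;
\item $h(\nabla^{\varphi}_{e_i}d\varphi(e_i),d\varphi(e_j))h(\nabla^{\varphi}_{e_j}\upsilon,\tau^{s})$, which after summing over $i$ becomes $h(\tau(\varphi),d\varphi(e_j))h(\nabla^{\varphi}_{e_j}\upsilon,\tau^{s}(\varphi))$;
\item $h(d\varphi(e_i),\nabla^{\varphi}_{e_i}d\varphi(e_j))h(\nabla^{\varphi}_{e_j}\upsilon,\tau^{s})$;
\item $h(d\varphi(e_i),d\varphi(e_j))h(\nabla^{\varphi}_{e_j}\upsilon,\nabla^{\varphi}_{e_i}\tau^{s})$.
\end{itemize}
Solving for the second-order term reproduces exactly the three negative lines of the statement, plus $\operatorname{div}_g\eta_3$.

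\textbf{Main obstacle.} The one point needing care is index order. In Step 1 the second-order term appears as $\nabla_{e_i}\nabla_{e_j}\upsilon$, with the outer derivative along $e_i$. Since $\eta_3$ involves $\nabla^{\varphi}_{e_j}\upsilon$ and is differentiated along $e_i$, this order matches directly, and no further commutation of $\nabla_{e_i}$ with $\nabla_{e_j}$ is needed. This works only because $\nabla d\varphi$ is symmetric, which lets us write $\nabla d\varphi_t(e_i,e_j)=\nabla^{\phi}_{e_i}d\varphi_t(e_j)$ with $e_i$ outermost.

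The tension term comes with a single sum over $j$. The lemma writes it inside the double sum, which we read with the same convention as the $\tau$-terms in Lemma \ref{Lem2}.

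Finally, the $\tau^{s}(\varphi_t)$ factor is not differentiated: its derivative is accounted for elsewhere via the factor $2$ in the expansion of $\frac{d}{dt}E_{2,sym}$. So the computation above is complete.
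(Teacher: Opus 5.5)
Your proposal is correct and follows the paper's proof essentially step for step. You use the Leibniz rule in $t$, the torsion-free swap $\nabla^{\phi}_{\partial t}d\varphi_t(e_j)=\nabla^{\phi}_{e_j}d\phi(\partial t)$ together with the curvature commutation for the third piece, and then integration by parts against $\eta_3$ at a point where $\nabla^{M}_{e_i}e_j=0$. Your single-sum reading of the $\tau(\varphi)$-term and of $\eta_3$ is the right one; the paper's $\sum_{i,j}$ there is an abuse of notation. One minor correction: the index order needs no symmetry of $\nabla d\varphi$, since $\nabla d\varphi(e_i,e_j)=\nabla^{\varphi}_{e_i}d\varphi(e_j)$ at $x$ by definition, and the coefficient $h(d\varphi(e_i),d\varphi(e_j))$ is symmetric anyway.
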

\begin{proof}[Proof of Lemma \ref{Lem3}]
We compute
\begin{eqnarray*}
  \sum_{i,j=1}^{m}\frac{\partial}{\partial t}[h(d\varphi_{t}(e_{i}),d\varphi_{t}(e_{j}))\nabla d\varphi_{t}(e_{i},e_{j})] &=& \sum_{i,j=1}^{m}\Big\{h(\nabla^{\phi}_{e_{i}}d\phi(\partial t),d\varphi_{t}(e_{j}))\nabla d\varphi_{t}(e_{i},e_{j})\\
   &+&h(d\varphi_{t}(e_{i}),\nabla^{\phi}_{e_{j}}d\phi(\partial t))\nabla^{\phi}_{e_{i}}d\varphi_{t}(e_{j}) \\
   &+& h(d\varphi_{t}(e_{i}),d\varphi_{t}(e_{j}))\nabla^{\phi}_{\partial t}\nabla^{\phi}_{e_{i}}d\varphi_{t}(e_{j})\Big\}.
\end{eqnarray*}
We find that\\

$\displaystyle\sum_{i,j=1}^{m}h\Big(\frac{\partial}{\partial t}[h(d\varphi_{t}(e_{i}),d\varphi_{t}(e_{j}))\nabla d\varphi_{t}(e_{i},e_{j})],\tau^{s}(\varphi_{t})\Big)\Big|_{t=0}$
\begin{eqnarray}\label{E9}
\qquad\qquad\qquad\qquad\nonumber  &=& \sum_{i,j=1}^{m}\Big\{h(\nabla^{\varphi}_{e_{i}}\upsilon,d\varphi(e_{j}))h(\nabla d\varphi(e_{i},e_{j}),\tau^{s}(\varphi))\\
\nonumber &+& h(d\varphi(e_{i}),d\varphi(e_{j}))h(\nabla^{\phi}_{\partial t}\nabla^{\phi}_{e_{i}}d\varphi_{t}(e_{j}),\tau^{s}(\varphi_{t}))\big|_{t=0}\\
&+&h(d\varphi(e_{i}),\nabla^{\varphi}_{e_{j}}\upsilon)h(\nabla d\varphi(e_{i},e_{j}),\tau^{s}(\varphi))\Big\}.
\end{eqnarray}
The second term in the right hand of (\ref{E9}), becomes\\

$\displaystyle\sum_{i,j=1}^{m}h(d\varphi(e_{i}),d\varphi(e_{j}))h(\nabla^{\phi}_{\partial t}\nabla^{\phi}_{e_{i}}d\varphi(e_{j}),\tau^{s}(\varphi_{t}))\Big|_{t=0} $
\begin{eqnarray} \label{E10}
 \qquad\qquad\qquad\qquad\nonumber &=& \sum_{i,j=1}^{m}\Big\{
  h\big(d\varphi(e_{i}),d\varphi(e_{j})\big)h\big(\nabla ^{\varphi}_{e_{i}}\nabla ^{\varphi}_{e_{j}}\upsilon,\tau^{s}(\varphi)\big) \\\nonumber
  &+&h(d\varphi(e_{i}),d\varphi(e_{j}))h(R^{N}(\upsilon, d\varphi(e_{i}))d\varphi(e_{j}),\tau^{s}(\varphi))\Big\}\\
\nonumber  &=&\sum_{i,j=1}^{m}\Big\{e_{i}\Big[h(d\varphi(e_{i}),d\varphi(e_{j}))h(\nabla ^{\varphi}_{e_{j}}\upsilon,\tau^{s}(\varphi))\Big]\\ \nonumber
&+& h(d\varphi(e_{i}),d\varphi(e_{j}))h(R^{N}(\upsilon , d\varphi(e_{i}))d\varphi(e_{j}),\tau^{s}(\varphi))\\
 \nonumber &-&h(\tau(\varphi),d\varphi(e_{j}))h(\nabla ^{\varphi}_{e_{j}}\upsilon ,\tau^{s}(\varphi))\\
 \nonumber &-&h(d\varphi(e_{i}),\nabla^{\varphi}_{e_{i}}d\varphi(e_{j}))h(\nabla ^{\varphi}_{e_{j}}\upsilon ,\tau^{s}(\varphi))\\
&-& h(d\varphi(e_{i}),d\varphi(e_{j}))h(\nabla ^{\varphi}_{e_{j}}\upsilon ,\nabla^{\varphi}_{e_{i}}\tau^{s}(\varphi))\Big\}.
  \end{eqnarray}
By replacing (\ref{E10}) in (\ref{E9}), we get the result of Lemma (\ref{Lem3}).
\end{proof}
\begin{proof}[Proof of Theorem \ref{thm2}]
The equation (\ref{E11}) and three Lemmas, (\ref{lem1}), (\ref{Lem2}), (\ref{Lem3}), gives us
\begin{eqnarray}\label{E12}
 \nonumber
h(\frac{\partial}{\partial t}\tau^{s}(\varphi_{t}),\tau^{s}(\varphi_{t}))\Big|_{t=0}&=&
\sum_{i,j=1}^{m}\Big\{h(d\varphi(e_{i}),d\varphi(e_{j}))h(R^{N}(\upsilon, d\varphi(e_{i}))d\varphi(e_{j}),\tau^{s}(\varphi))\\\nonumber
 &-& h(d\varphi(e_{i}),d\varphi(e_{j}))h(\nabla ^{\varphi}_{e_{j}}\upsilon ,\nabla^{\varphi}_{e_{i}}\tau^{s}(\varphi))\\\nonumber
 &-&h(\nabla^{\varphi}_{e_{i}}d\varphi(\upsilon ,d\varphi(e_{j}))h(d\varphi(e_{j}),\nabla^{\varphi}_{e_{i}}\tau^{s}(\varphi))\\
 &-&h(d\varphi(e_{i}),\nabla ^{\varphi}_{e_{j}}\upsilon)h(d\varphi(e_{j}),\nabla^{\varphi}_{e_{i}}\tau^{s}(\varphi))\Big\}.
\end{eqnarray}
For the second term in the right hand of (\ref{E12}), we have
\begin{eqnarray}\label{E13}
 \nonumber -\sum_{i,j=1}^{m}h\big(d\varphi(e_{i}),d\varphi(e_{j})\big)h\big(\nabla ^{\varphi}_{e_{j}}\upsilon,\nabla^{\varphi}_{e_{i}}\tau^{s}(\varphi)\big)
 &=&-\sum_{i,j=1}^{m}e_{j}\Big[ h\big(d\varphi(e_{i}),d\varphi(e_{j})\big)h\big(\upsilon,\nabla^{\varphi}_{e_{i}}\tau^{s}(\varphi)\big)\Big] \\ \nonumber
 \nonumber  &+& \sum_{i,j=1}^{m}h\big(\nabla ^{\varphi}_{e_{j}}d\varphi(e_{i}),d\varphi(e_{j})\big)h\big(\upsilon,\nabla^{\varphi}_{e_{i}}\tau^{s}(\varphi)\big) \\
 \nonumber  &+& \sum_{i=1}^{m}h\big(d\varphi(e_{i}),\tau(\varphi)\big)h\big(\upsilon,\nabla^{\varphi}_{e_{i}}\tau^{s}(\varphi)\big)\\
   &+&\sum_{i,j=1}^{m}h\big(d\varphi(e_{i}),d\varphi(e_{j})\big)h\big(\upsilon,\nabla ^{\varphi}_{e_{j}}\nabla^{\varphi}_{e_{i}}\tau^{s}(\varphi)\big).\qquad\qquad
\end{eqnarray}
For the third term in the right hand of (\ref{E12}), we have
\begin{eqnarray}\label{E14}
\nonumber -\sum_{i,j=1}^{m}h\big(\nabla^{\varphi}_{e_{i}}\upsilon ,d\varphi(e_{j})\big)h\big(d\varphi(e_{j}),\nabla^{\varphi}_{e_{i}}\tau^{s}(\varphi)\big)
&=&-\sum_{i,j=1}^{m}e_{i}\Big[
  h\big(\upsilon,d\varphi(e_{j})\big)h\big(d\varphi(e_{j}),\nabla^{\varphi}_{e_{i}}\tau^{s}(\varphi)\big)\Big] \\ \nonumber
  &+& \sum_{i,j=1}^{m}h\big(\upsilon,\nabla^{\varphi}_{e_{i}}d\varphi(e_{j})\big)h\big(d\varphi(e_{j}),\nabla^{\varphi}_{e_{i}}\tau^{s}(\varphi)\big)\\  \nonumber
  &+& \sum_{i,j=1}^{m}h\big(\upsilon,d\varphi(e_{j})\big)h\big(\nabla^{\varphi}_{e_{i}}d\varphi(e_{j}),\nabla^{\varphi}_{e_{i}}\tau^{s}(\varphi)\big) \\
  &+& \sum_{i,j=1}^{m} h\big(\upsilon,d\varphi(e_{j})\big)
 h\big(d\varphi(e_{j}),\nabla^{\varphi}_{e_{i}}\nabla^{\varphi}_{e_{i}}\tau^{s}(\varphi)\big).\qquad\qquad
 \end{eqnarray}
  The fourth  term in the right hand of (\ref{E12}) becomes
 \begin{eqnarray}\label{E15}
 \nonumber  -\sum_{i,j=1}^{m}h\big(d\varphi(e_{i}),\nabla ^{\varphi}_{e_{j}}\upsilon \big) h\big(d\varphi(e_{j}),\nabla^{\varphi}_{e_{i}}\tau^{s}(\varphi)\big)
 &=&-\sum_{i,j=1}^{m}e_{j}\Big[
   h\big(d\varphi(e_{i}),\upsilon \big) h\big(d\varphi(e_{j}),\nabla^{\varphi}_{e_{i}}\tau^{s}(\varphi)\big)\Big] \\ \nonumber
    &+& \sum_{i,j=1}^{m}h\big(\nabla ^{\varphi}_{e_{j}}d\varphi(e_{i}),\upsilon \big) h\big(d\varphi(e_{j}),\nabla^{\varphi}_{e_{i}}\tau^{s}(\varphi)\big) \\ \nonumber
 &+&\sum_{i=1}^{m}h\big(d\varphi(e_{i}),\upsilon \big) h\big(\tau(\varphi),\nabla^{\varphi}_{e_{i}}\tau^{s}(\varphi)\big)  \\
 &+&  \sum_{i,j=1}^{m}h\big(d\varphi(e_{i}),\upsilon \big) h\big(d\varphi(e_{j}),\nabla ^{\varphi}_{e_{j}}\nabla^{\varphi}_{e_{i}}\tau^{s}(\varphi)\big).\qquad\qquad
 \end{eqnarray}
 By replacing  (\ref{E15}),  (\ref{E14}) and  (\ref{E13})  in (\ref{E12}) and integrating  over $M$ compact, we deduce the result of Theorem \ref{thm2}.
\end{proof}
\begin{remark}
Let $\varphi$ be a smooth map from compact Riemannian manifold $(M,g)$ to Riemannian manifold $(N,h)$.
We have, $\tau^s_2(\varphi)=J^s_{\varphi}(\tau^s(\varphi))$. Moreover, the map $\varphi$ is bi-symphonic if and only if $\tau^s_2(\varphi)=0$.
\end{remark}
\begin{example}
A smooth curve $\gamma:I\subseteq\mathbb{R}\longrightarrow\mathbb{R}^n$, $t\longmapsto(\gamma_1(t),...,\gamma_n(t))$ is bi-symphonic if and only if
$$14\gamma_{i}^{(1)}(t)^2\gamma_{i}^{(2)}(t)^3+17\gamma_{i}^{(1)}(t)^3\gamma_{i}^{(2)}(t)\gamma_{i}^{(3)}(t)+2\gamma_{i}^{(1)}(t)^4\gamma_{i}^{(4)}(t)=0,\quad \forall i=1,...,n.$$
If $\gamma_i(t)=t^{4/3}$, or $t^{15/11}$ with $I=(0,\infty)$, the curve $\gamma$ is bi-symphonic non-symphonic. Here, $\tau^s(\gamma)=3\sum_{i=1}^n(\gamma_{i}^{(1)})^2\gamma_{i}^{(2)}\partial_i$.
\end{example}
\begin{example}
For all $m\geq2$,  the canonical inclusion $\varphi:\mathbb{S}^m\hookrightarrow\mathbb{R}^{m+1}$ is not bi-symphonic map. Indeed;
let $\{e_i\}_{i=1,...,m}$ be an orthonormal frame with respect to induced Riemannian metric on $\mathbb{S}^m$ by the inner product $<,>$ on $\mathbb{R}^n$,
and $\nabla d\varphi$ the second fundamental form of the sphere $\mathbb{S}^m$ on $\mathbb{R}^{m+1}$, then $(\nabla d\varphi)(X,Y)=-<X,Y>P$, for all $X,Y\in\Gamma(T\mathbb{S}^m)$,
where $P$ is the position vector field on $\mathbb{R}^{m+1}$. One can verify by direct computations that $\tau^s(\varphi)   = -m(P\circ\varphi)$,
$$2\sum_{i,j=1}^{m}h(\nabla^{\varphi}_{e_{i}}\tau^{s}(\varphi),d\varphi(e_{j}))\nabla d\varphi(e_{i},e_{j})   =  2m^2(P\circ\varphi),$$
$$\sum_{i,j=1}^{m}\Big[ h(\nabla^{\varphi}_{e_{i}}\nabla^{\varphi}_{e_{i}}\tau^{s}(\varphi)-
   \nabla^{\varphi}_{\nabla^{M}_{e_{i}}e_{i}}\tau^{s}(\varphi),d\varphi(e_{j}))+
   h(\nabla^{\varphi}_{e_{j}}\tau^{s}(\varphi),\nabla d\varphi(e_{i},e_{i}))\Big]d\varphi(e_{j})=0,$$
$$\sum_{i,j=1}^{m}\Big[ h(\nabla d\varphi(e_{i},e_{j}),d\varphi(e_{j}))+
   h(d\varphi(e_{i}),\nabla d\varphi(e_{j},e_{j}))\Big]\nabla^{\varphi}_{e_{i}}\tau^{s}(\varphi)=0,$$
and the following
\begin{eqnarray*}
\sum_{i,j=1}^{m}  h(d\varphi(e_{i}),d\varphi(e_{j}))\Big[\nabla^{\varphi}_{e_{j}}\nabla^{\varphi}_{e_{i}}\tau^{s}(\varphi)&-&\nabla^{\varphi}_{\nabla^{M}_{e_{j}}e_{i}}\tau^{s}(\varphi)\\
    &+&
   R^{\mathbb{R}^{m+1}}(\tau^{s}(\varphi),d\varphi(e_{j}))d\varphi(e_{i})\Big]=m^2(P\circ\varphi).
\end{eqnarray*}
According to Theorem \ref{thm2} we find that $\tau^s_2(\varphi)   =3m^2(P\circ\varphi)$.
\end{example}

\subsection*{Conflict of interest}
The author states that there is no conflict of interest.
\subsection*{Data availability}
Not applicable.

\end{document}